\documentclass[xcolor,table,nobib]{tufte-handout}

\usepackage{matyi}
\theoremstyle{remark}
\newtheorem*{remarks}{Remark}
\usepackage{float}
\usepackage{tkz-euclide}
\usepackage{fbb}

\usepackage[backend=biber,autocite=footnote,style=authoryear]{biblatex}

\addbibresource{ref.bib}

\newcommand{\rr}{\raggedright}
\newcommand{\cq}{\coloneqq}
\setcounter{secnumdepth}{2}
\title{Sublacunary sequences that are strong sweeping out}
\author{Sovanlal Mondal\thanks{Supported by the National Science
    Foundation, grant number DMS-1855745.}, Madhumita Roy and M\'at\'e
  Wierdl\\
  \vspace{0.5 cm}
  \normalsize Department of Mathematical Sciences,
  University of Memphis\\
  Memphis, TN 38152, USA \\
  email \href{mailto:smondal@memphis.edu}{smondal@memphis.edu}}
\date{}

\begin{document}

\maketitle

\begin{abstract}
  An increasing sequence $(a_n)$ of positive integers which satisfies
  $\frac{a_{n+1}}{a_n}>1+\eta$ for some positive $\eta$ is called a
  lacunary sequence.  It has been known for over twenty years that
  every lacunary sequence is strong sweeping out which means that in
  every aperiodic dynamical system we can find a set $E$ of arbitrary
  small measure so that
  $\limsup_N\frac1N \sum_{n\le N}\setone_E(T^nx)=1$ and
  $\liminf_N\frac1N \sum_{n\le N}\setone_E(T^nx)=0$ almost everywhere.
  In this paper we improve this result by showing that if $(a_n)$
  satisfies only $\frac{a_{n+1}}{a_n}>1+\frac1{(\log\log n)^{1-\eta}}$
  for some positive $\eta$ then it is already strong sweeping out.
\end{abstract}
\tableofcontents
\section{Introduction and main results:}
Throughout this paper, we use the notation
\begin{equation}
  \label{eq:6}
  [N]\cq \cbrace*{1,2,\dots,N}
\end{equation}
Let $T$ be a measure preserving transformation on the probability
space $(X, \Sigma, \mu)$.  After Birkhoff's pointwise ergodic theorem
was proved, naturally the question was raised whether it is possible
to generalize the theorem along any subsequence $(a_n)$ of integers
instead of taking the entire sequence $(n)$.  Krengel\autocite{K} was
the one who first constructed a strictly increasing sequence $(a_n)$
of positive integers so that in every aperiodic system the ergodic
averages $\frac{1}{N}\displaystyle\sum_{n\in[N]}f(T^{a_n}x)$ diverge
almost everywhere.  Soon after, Bellow\autocite{Bellow} showed that if
$(a_n)$ is a \emph{lacunary sequence}, that is, it satisfies
$\frac{a_{n+1}}{a_n}\geq 1+\eta$ for some positive $\eta$, then the
ergodic averages along $(a_n)$ diverge a.e. in every aperiodic system,
though she showed divergence only for $f\in L^p$ for $p<\infty$.  Our
first main result provides a growth condition for a sequence to be
\emph{ pointwise bad } which applies to \emph{non lacunary sequences}
as well, that is sequences for which $\lim_n \frac{a_{n+1}}{a_n}=1$.

\begin{thm}[label={thm:1}]{Strong sweeping out with deterministic
    condition}{}\rr
  Suppose $(a_n)$ is a sequence which satisfies
  \begin{equation}
    \label{eq:2}
    \frac{a_{n+1}}{a_n}\geq 1+ {\frac1{(\log \log n)^{1-\eta}}}
  \end{equation}
  for some $\eta>0$.

  Then in every aperiodic dynamical system $(X,\Sigma, \mu, T)$ and
  every $\epsilon>0$, there exists a set $E\in \Sigma$ with
  $\mu(E)<\epsilon$ such that for almost every $x\in X$, we have
  \begin{align}
    \label{eq:4}
    \limsup_{N\to
    \infty}\frac{1}{N}\sum_{n\in[N]}\setone_E (T^{a_n}x)&= 1\\
    \intertext{ and }
    \liminf_{N\to \infty}\frac{1}{N}\sum_{n\in[N]}\setone_E (T^{a_n}x)&= 0
  \end{align}
\end{thm}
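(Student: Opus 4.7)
The plan is to reduce, via a standard Rokhlin-tower transference, the theorem on an arbitrary aperiodic system to a concrete Fourier-analytic statement on the circle $\mathbb{R}/\mathbb{Z}$, and then verify that statement by harmonically analyzing the empirical measures $\mu_N \cq \frac{1}{N}\sum_{n\le N} \delta_{a_n}$. Because the conclusion is translation-equivariant in $n$ and in any finite prefix uses only the initial segment $a_1,\dots,a_N$ of the orbit, one may use a Rokhlin tower of height much larger than $a_N$ to embed the problem into the shift on a long interval of integers; building $E$ inside this tower then reduces to constructing, for every $\epsilon>0$, a Lebesgue set $E \subset \mathbb{R}/\mathbb{Z}$ of measure below $\epsilon$ for which the convolutions $\mu_N * \setone_E$ both peak close to $1$ and dip close to $0$ along appropriate subsequences of $N$. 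By Plancherel this is controlled through the exponential sums $\widehat{\mu_N}(\alpha) \cq \frac{1}{N}\sum_{n\le N} e^{2\pi i a_n \alpha}$.

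The combinatorial/analytic core is a block decomposition driven by the sublacunary hypothesis. Choose a rapidly increasing scale sequence $(M_k)$ and partition $[N]$ into blocks on which the ratios $a_{n+1}/a_n$ are themselves quasi-lacunary; the hypothesis $a_{n+1}/a_n \ge 1 + 1/(\log\log n)^{1-\eta}$ forces the number of such blocks needed to cover $[N]$ to grow only like a small power of $\log\log a_N$. On each block, classical near-lacunary Weyl / van~der~Corput estimates bound $\bigl|\sum e^{2\pi i a_n \alpha}\bigr|$ away from the trivial value on a set of $\alpha$ of almost full measure, and the mild loss in the gap parameter is compensated by the slow growth in the total number of blocks. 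Summing over blocks, one deduces that for $\alpha$ in a set of Lebesgue measure at least $1-\epsilon$ the quantities $|\widehat{\mu_N}(\alpha)|$ oscillate between values close to $0$ and values close to $1$ as $N$ varies; the desired set $E$ is then assembled by a Riesz-product / trigonometric-polynomial construction supported on the frequencies where these oscillations occur.

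The main obstacle is controlling the exceptional set of resonant frequencies. In the strictly lacunary case one exploits near-Sidonicity: at most one of the $a_n\alpha$ can cluster near an integer inside any fixed short arc, so the Fourier sums admit cheap cancellation at almost every $\alpha$. Under the weaker sublacunary hypothesis many near-resonances may coexist, and the measure of the $\alpha$ for which $|\widehat{\mu_N}(\alpha)|$ remains pathologically close to $1$ must be bounded by a quantitative Diophantine counting argument on pairs $(n,m)$ with $\|(a_n-a_m)\alpha\|$ small. The exponent $1-\eta$ in the hypothesis is calibrated to sit just on the summable side of the resulting Borel--Cantelli estimate: a weaker power of $\log\log n$, or none at all, would let the exceptional set accumulate uncontrollably across blocks. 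This calibration, rather than any single isolated estimate, is where the precise strength of the hypothesis is extracted, and I expect it to be the most delicate part of the argument.
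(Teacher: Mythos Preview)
Your proposal diverges completely from the paper's argument and, more importantly, contains a genuine error that would make the outlined approach fail.

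The central claim in your sketch is that for $\alpha$ in a set of Lebesgue measure at least $1-\epsilon$, the normalized exponential sums $\widehat{\mu_N}(\alpha)=\frac1N\sum_{n\le N}e^{2\pi i a_n\alpha}$ ``oscillate between values close to $0$ and values close to $1$ as $N$ varies.'' This is false. Since the $a_n$ are distinct integers, $\int_0^1|\widehat{\mu_N}(\alpha)|^2\,d\alpha=1/N$, and by the Davenport--Erd\H{o}s--LeVeque criterion (or a direct Borel--Cantelli along $N=2^j$ followed by interpolation) one has $\widehat{\mu_N}(\alpha)\to 0$ for almost every $\alpha$. Thus $\limsup_N|\widehat{\mu_N}(\alpha)|=0$ a.e., and the ``close to $1$'' half of your oscillation claim never occurs on a set of positive measure. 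The only analytic tools you name (Weyl/van der Corput cancellation) push $|\widehat{\mu_N}|$ toward $0$, which is the opposite of what strong sweeping out requires: one must \emph{produce} a system and a set on which the averages are forced near $1$, not show generic cancellation. The relevant Fourier criterion (from Akcoglu \emph{et al.}) is an \emph{existence} statement about carefully chosen $\alpha$, not an almost-everywhere statement, and your Borel--Cantelli/Diophantine bookkeeping is aimed at bounding exactly the set of $\alpha$ you would actually need.

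For contrast, the paper's proof is constructive and uses no Fourier analysis. Via a Conze-type transference (Lemma~\ref{lem:conze}) it suffices to exhibit, in \emph{some} system, a set $E$ of small measure on which a maximal average exceeds $1-\epsilon$ everywhere. The system is the $K$-dimensional torus $\mathbb{T}^K$ with a rotation $T x = x + (r_1,\dots,r_K)$. One splits the index set $[N]$ into $K$ residue classes modulo $K$; the growth hypothesis guarantees $a_{n+K}/a_n>2Q$, so each class is genuinely lacunary with ratio $>2Q$. Lemma~\ref{lem:1} then lets one choose each $r_k$ so that prescribed portions of the $k$th subsequence land in prescribed arcs $I_q$ of length $1/Q$. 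The bad set $E$ is the union over $k$ of the slabs $\{x_k\in I_1\cup I_2\}$, of measure $\le 2K/Q$. One matches each of the $Q^K$ small cubes of $\mathbb{T}^K$ with a dyadic averaging window $J_i$ and checks that $T^{a_n}x\in E$ for every $n\in J_i$ whenever $x$ lies in the matched cube. The arithmetic of the method forces $N\approx 2^{Q^K}$ and $K>(\log Q)^{2/\eta}$, and the hypothesis $\frac{a_{n+1}}{a_n}\ge 1+(\log\log n)^{-(1-\eta)}$ is exactly what makes these constraints compatible with $K\ll Q$.
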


An example of a sequence $(a_n)$ which satisfies the growth condition
in \cref{eq:2} is
$a_n= \intpart{e^{\frac{n}{(\log \log n)^{1-\eta}}}}$, for some
$\eta>0$.

\begin{defn}[label={defn:3}]{Pointwise good and bad sequences}{}\rr
  Let $1\leq p\leq \infty$ and $(a_n)$ be a sequence of positive
  integers.

  We say $(a_n)$ is \emph{pointwise good} for $L^p$ if for every
  measure preserving system $(X,\Sigma, \mu, T)$ and every
  $f\in L^p(X)$,
  $\displaystyle\lim_{N\to \infty}\frac{1}{N}\sum_{n\in
    [N]}f(T^{a_n}x)$ exists a.e.

  We say the sequence $(a_n)$ is \emph{pointwise bad} for $L^p$ if for
  every aperiodic measure preserving system $(X,\Sigma, \mu, T)$ there
  exists a function $f\in L^p(X)$ such that
  $\displaystyle\lim_{N\to \infty}\frac{1}{N}\sum_{n\in
    [N]}f(T^{a_n}x)$ fails to exist a.e.
\end{defn}
Whether a sequence will be pointwise good or bad for $L^p$ depends on
many factors, such as, the speed of the sequence $(a_n)$, the value of
$p$, and sometimes the intrinsic arithmetic properties of the sequence
$(a_n)$. From \Cref{thm:1}, we can see that if a sequence grows very
fast, then it will be pointwise bad even for $L^\infty$. On the other
extreme, if a sequence grows slower than any positive power of $n$,
for example $(a_n)=\big((\log n)^c\big)$ for some $c>0$, then it is
again pointwise bad for $L^\infty$ as shown in Jones and Wierdl
\autocite[Example 2.18]{JW}.  See also a
related result of Loyd\autocite{Loyd}. Bellow\autocite{Bellow2} and
Reinhold-Larsson\autocite{Karin} proved that whether a sequence will
be pointwise good for $L^p$ or not can depend on the value of
$p$. More precisely, they showed that for any given
$1\leq p<q\leq \infty$, there are sequences $(a_n)$ which are
pointwise good for $L^q$ but pointwise bad for $L^p$.
Parrish\autocite{Andrew} gives refinements of these results in terms
of Orlicz spaces.
In general, neither the growth rate of the sequence, nor the value of $p$ alone determines whether the sequence is pointwise good or bad.
In some cases, one has to analyze the intrinsic arithmetic
properties of the sequence $(a_n)$. One such curious example is
$(n^k)_n$.  A celebrated result of Bourgain\autocite[Theorem 2]{BO} says
that the sequence $(n^k)_n$ is pointwise good for $L^2$ when $k$ is a
positive integer. On the other hand, the sequence
$\pa*{\intpart{n^k+\log n}}$ is known to be pointwise bad for $L^2$
when $k$ is a positive integer\autocite[Theorem C]{BKQW}.

After showing that polynomials are pointwise good for $L^2$, Bourgain
showed\autocite{MR1019960} that a polynomial sequence is pointwise
good for $L^p$ for every $p>1$ and Wierdl\autocite{Wierdl} showed the
same for the sequence of primes.  However, the sequence of squares is
pointwise bad for $L^1$, as was shown by
Buczolich-Mauldin\autocite{BuczolichMauldin}, and
LaVictoire\autocite{LaVictoire2} showed the same for the sequence
$(n^k)$ of $k$th powers for a fixed positive integer $k$ and for the
sequence of primes.  It was largely believed that there cannot be any
sequence $(a_n)$ which is pointwise good for $L^1$ and satisfies
$(a_{n+1}-a_n) \to \infty$ as $n \to \infty$, but
Buczolich\autocite{Buczolich} disproved this conjecture.
LaVictoire\autocite{LaVictoire1} showed that a large class or random
sequences also serve as counterexamples.

Urban and Zienkiewicz\autocite{UrbanZien} showed that
$\lfloor n^c\rfloor, c\in(1,1.001)$ is pointwise good for $L^1$. The
current best result is due to Mirek\autocite{Mirek} who showed that
$\lfloor n^c\rfloor, c\in(1,\frac{30}{29})$ is pointwise good for
$L^1$, see also \autocite{Trojan}. It would be interesting to know if
the latter result can be extended to all positive non integer $c$.
The case of $L^2$ is known\autocite{BKQW} as well as $L^p$, $p>1$.

For further exposition in this area, the reader is referred to the
survey article of \autocite{RW}.

The extreme case scenario of non-convergence is the strong sweeping
out property of a sequence.
\begin{defn}[label={defn:1}]{strong sweeping out}{}\rr
  Let $(a_n)$ be a sequence of integers.

  We say $(a_n)$ is \emph{strong sweeping out} if in every aperiodic
  dynamical system $(X,\Sigma, \mu, T)$ and every $\epsilon>0$, there
  exists a set $E\in \Sigma$ with $\mu(E)<\epsilon$ such that for
  almost every $x\in X$ we have
  \begin{align}
    \label{eq:5}
    \limsup_{N\to
    \infty}\frac{1}{N}\sum_{n\in[N]}\setone_E (T^{a_n}x)&= 1\\
    \intertext{ and }
    \liminf_{N\to \infty}\frac{1}{N}\sum_{n\in[N]}\setone_E (T^{a_n}x)&= 0
  \end{align}
\end{defn}
Akcoglu etal\autocite{ABJLRW} showed that every lacunary sequence is
strong sweeping out, and our \Cref{thm:1} extends their result.

\Cref{thm:1} can also be interpreted from a different viewpoint.
Jones-Wierdl\autocite[Corollary 2.14]{JW} proved that for
$1\leq p < \infty$ if a sequence $(a_n)$ satisfies
$\dfrac{a_{n+1}}{a_n}\geq 1+{\frac{1}{(\log n)^{\frac{1}{p}-\eta}}}$
for some $\eta>0$, then $(a_n)$ is pointwise bad, and our \cref{thm:1}
can be viewed as an extension of this result to not only $L^\infty$
but indicators as well.  It is also interesting to compare our result
with the results of Berkes\autocite{Berkes} on lacunary polynomials.

In the next theorem, we will give a probabilistic condition for a
sequence to be strong sweeping out. Before we state the result, let us
explain the notion of a randomly generated sequence. Suppose
$(\sigma_n)$ is a sequence of positive numbers; $\sigma_n$ is the
probability with which $n$ is chosen into the random sequence. More
precisely, let $Y_n$ be a sequence of $0-1$ valued random variables on
the probability space $(\Omega,\beta, P)$ so that $P(Y_n=1)=\sigma_n$
and $P(Y_n=0)=1-\sigma_n.$ For each $\omega\in \Omega$, let $A^\omega$
be the sequence defined by the property that $n\in A^\omega$ if and
only if $Y_n(\omega)=1.$ Our second main result is the following:
\begin{thm}[label={thm:3}]{Strong sweeping out with probabilistic
    condition}{}\rr
  Let $\eta>0$ be arbitrary and
  $\sigma_n=\frac{(\log \log \log n)^{1-\eta}}{n}$.

  Then for a.e. $\omega$ the random sequence $A^\omega= (a_n(\omega))$
  is strong sweeping out.
\end{thm}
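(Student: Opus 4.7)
The plan is to deduce \Cref{thm:3} from the techniques behind \Cref{thm:1} by showing that, almost surely, the random averaging measures inherit the Fourier behaviour that drives the proof of \Cref{thm:1}. The first step is to identify the expected behaviour of the enumeration. Setting $S(t)\cq\sum_{n\le t}Y_n$, integration of $\sigma_n$ gives $\mathbb{E}\,S(t)\sim(\log\log\log t)^{1-\eta}\log t$. Inverting, the $k$-th element of $A^\omega$ satisfies, in expectation, $a_k(\omega)\approx\exp\bigl(k/(\log\log k)^{1-\eta}\bigr)$, so that the expected ratio
\[\frac{a_{k+1}(\omega)}{a_k(\omega)}\approx 1+\frac{1}{(\log\log k)^{1-\eta}}\]
matches the deterministic growth hypothesis \cref{eq:2} of \Cref{thm:1}. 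In other words, the density $\sigma_n$ is tuned exactly so that $A^\omega$ lies on the boundary of applicability of \Cref{thm:1}.

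Next I would promote this expected behaviour to an almost sure statement. Because the $Y_n$ are independent Bernoullis with small means, Bernstein's inequality yields exponential tail bounds for $S(t)-\mathbb{E}\,S(t)$, and a dyadic Borel--Cantelli argument gives $S(t)=(1+o(1))\mathbb{E}\,S(t)$ almost surely, hence
\[a_k(\omega)=\exp\!\bigl((1+o(1))\,k/(\log\log k)^{1-\eta}\bigr)\quad\text{a.s.}\]
At this point one is very tempted to feed $(a_k(\omega))$ directly into \Cref{thm:1}.

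The main obstacle is that this density estimate alone does not yield $a_{k+1}/a_k\ge 1+1/(\log\log k)^{1-\eta}$ for every $k$: random clustering forces infinitely many consecutive pairs to have ratio close to $1$, so the hypothesis \cref{eq:2} fails pointwise in $k$. To bypass this I would revisit the argument for \Cref{thm:1}, whose core consists of Bourgain-type estimates on the Fourier coefficients $\widehat{\mu_N}(\theta)$ of the averaging measure, fed into an entropy / Rokhlin-tower step that produces the sweeping out set $E$. For the random averaging measures $\mu_N^\omega\cq\tfrac{1}{N}\sum_{n=1}^N\delta_{a_n(\omega)}$, one rewrites the Fourier coefficients as $\tfrac{1}{S(T)}\sum_{m\le T}Y_m(\omega)\,e(m\theta)$, where $T=T(N,\omega)$ is chosen so that $S(T)=N$; these are sums of independent bounded random variables whose expectations, after normalization, coincide with the Fourier coefficients of a deterministic sequence satisfying \cref{eq:2}. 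A Hoeffding-type variance bound plus a chaining argument over a sufficiently fine net of frequencies $\theta$, combined with Borel--Cantelli along dyadic $N$, should upgrade this deterministic Fourier estimate to an a.s.\ pointwise estimate for $\widehat{\mu_N^\omega}(\theta)$, which then feeds into the entropy/tower machinery exactly as in \Cref{thm:1}. The principal technical hurdle will be making the frequency net simultaneously fine enough for the entropy argument of \Cref{thm:1} to run verbatim and coarse enough that the variance bounds survive the union bound underlying Borel--Cantelli.
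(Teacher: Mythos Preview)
Your diagnosis of the obstacle---random clustering forces infinitely many ratios $a_{k+1}(\omega)/a_k(\omega)$ close to $1$, so \cref{eq:2} fails pointwise---is exactly right. But your proposed workaround rests on a mischaracterization of the proof of \Cref{thm:1}. That proof contains no Bourgain-type Fourier estimates, no entropy bounds, and no Rokhlin towers: it is a direct geometric construction (the ``grid method'') on a high-dimensional torus $\setT^K$, using \Cref{lem:1} to place the iterates $r_ka_n$ into prescribed subintervals of $[0,1)$ and \Cref{lem:conze} for transference. There is therefore no Fourier machinery to ``feed into,'' and your chaining-over-frequency-nets plan has no target to connect to. Even if you replaced the target by some genuine Fourier criterion for strong sweeping out from the literature, the proposal remains a vague sketch whose ``principal technical hurdle'' is, as you yourself flag, unresolved.

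The paper bypasses the clustering obstacle by a far more elementary deletion argument (\Cref{lem:5}). One partitions $\setN$ into intervals $I_n=[v_n,v_{n+1})$ with $v_n=\exp\bigl(n/(\log\log n)^{1-\eta/2}\bigr)$, then removes from $A^\omega$ every element that either shares its interval $I_n$ with another element of $A^\omega$ or lies in an $I_n$ whose successor $I_{n+1}$ is also occupied. The slack between $\eta$ and $\eta/2$ forces $\sum_{m\in I_n}\sigma_m\to 0$, and a short computation with the strong law of large numbers shows that the deleted set has relative density zero in $A^\omega$ almost surely. The surviving subsequence $B^\omega=(b_n(\omega))$ then satisfies $b_{n+1}/b_n\ge\exp\bigl(1/(\log\log n)^{1-\eta/2}\bigr)$ \emph{deterministically}, so \Cref{thm:1} applies to $B^\omega$ directly; and since $B^\omega(t)/A^\omega(t)\to 1$, the averages along $A^\omega$ and $B^\omega$ differ by $o(1)$, transferring strong sweeping out to $A^\omega$. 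This is the idea you were missing: rather than randomize the analytic input to \Cref{thm:1}, simply prune the sequence until the deterministic hypothesis holds.
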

This result is an improvement of a result of
Jones-Lacey-Wierdl\autocite[Theorem C]{JLW} where they reach the same
conclusion under the stronger hypothesis $\sigma(n)=\frac{1}{n}$.
\section{Proof of the main results:}
Let $S$ be a finite set and let $f=(f_n)_{n\in S}$ be a sequence of
numbers or functions indexed by $S$.  We denote the arithmetic average
of $(f_n)$ by $\setA_{S}f$,
\begin{equation}
  \label{eq:7}
  \setA_Sf=\setA_{n\in S}f_n\cq \frac{1}{\#S}\sum_{n\in S}f_n
\end{equation}
For a sequence $w=(w(n))_{n\in S}$ of numbers not identically $0$,
which we regard as the sequence of weights, we denote the $w$-weighted
average of $f$ by $\setA^w_{S}f$,
\begin{equation}
  \label{eq:8}
  \setA^w_Sf=\setA^w_{n\in S}f_n\cq \frac{1}{\sum_{n\in S}w(n)}\sum_{n\in S}w(n)f_n
\end{equation}

We will consider higher dimensional torus for proving our
result. Originally, such argument was used by Jones\autocite{JO} to
prove that the finite union of lacunary sequences is strong sweeping
out. Later an extension of this method was used by
Mondal\autocite{M}. We call this technique as the \emph{grid method}.

To prove \cref{thm:1}, we need the following two lemmas:
\begin{lem}[label={lem:1}]{}{}\rr
  Let $\tilde{A}=(a_n)_{n\in[N]}$ be a finite sequence of integers
  which satisfies $\frac{a_{n+1}}{a_n}>2Q$ for some $Q\leq N$. Suppose
  that $\tilde{A}=\cdot\hspace{-13pt}\bigcup\limits_{q\in [Q]}A_{q}$
  be partition of $\tilde{A}$.

  Then there exists an irrational number $r\in[0,1]$ such that for all
  $q\in [Q]$ we have $ra_n\in I_{Q-q}$ (mod $1$) whenever
  $a_n\in A_q$, where $I_q=\big(\frac{q-1}{Q}, \frac{q}{Q}\big).$
\end{lem}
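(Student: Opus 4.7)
The plan is to build the irrational $r$ by successive refinement of nested intervals. For each $n \in [N]$, let $q(n) \in [Q]$ be the unique index with $a_n \in A_{q(n)}$, and write $q'(n) = Q - q(n)$. The desired conclusion reads $r a_n \in I_{q'(n)}$ (mod $1$), and the set of $r \in (0,1)$ meeting this single constraint is
\[
G_n = \bigcup_{k=0}^{a_n-1} \left( \frac{k}{a_n} + \frac{q'(n)-1}{Q a_n},\ \frac{k}{a_n} + \frac{q'(n)}{Q a_n} \right),
\]
a disjoint union of $a_n$ open intervals, each of length $\frac{1}{Q a_n}$, laid down with period $\frac{1}{a_n}$.

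The main step is to pick nested open intervals $J_1 \supset J_2 \supset \cdots \supset J_N$ with $J_n$ a single connected component of $G_n$; in particular $|J_n| = \frac{1}{Q a_n}$. Any component of $G_1 \subset (0,1)$ serves as $J_1$. For the inductive step, I must check that $J_{n-1}$ contains a complete component of $G_n$. Since the components of $G_n$ are spaced with period $\frac{1}{a_n}$ and each has length $\frac{1}{Q a_n}$, any open interval of length at least $\frac{1}{a_n} + \frac{1}{Q a_n} = \frac{Q+1}{Q a_n}$ contains a full component. The hypothesis $a_n > 2 Q a_{n-1}$, combined with $Q \ge 1$, gives
\[
|J_{n-1}| = \frac{1}{Q a_{n-1}} > \frac{2}{a_n} \ge \frac{Q+1}{Q a_n},
\]
so such a component exists inside $J_{n-1}$ and can be designated as $J_n$.

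Finally, $J_N$ is a nonempty open subinterval of $(0,1)$ and hence contains uncountably many irrationals; any such $r \in J_N$ satisfies the conclusion, because $r \in J_n \subset G_n$ forces $r a_n \in I_{q'(n)} = I_{Q - q(n)}$ (mod $1$) for every $n \in [N]$. The only delicate point is the length bookkeeping in the inductive step, which is precisely calibrated by the $2Q$-lacunarity hypothesis; once that inequality is verified the rest is a routine pigeonhole on equally spaced intervals.
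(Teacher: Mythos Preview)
Your argument is correct: the nested-interval construction, using the $2Q$-lacunarity to guarantee that each $J_{n-1}$ is long enough to swallow a full component of $G_n$, is exactly the right mechanism, and your length estimate $|J_{n-1}|=\tfrac{1}{Qa_{n-1}}>\tfrac{2}{a_n}\ge\tfrac{Q+1}{Qa_n}$ is sharp enough to close the induction. The paper itself does not prove this lemma; it simply cites Lemma~2.13 of Jones--Wierdl, where the same nested-interval idea is carried out, so your approach coincides with the intended one.

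One cosmetic point worth noting (about the \emph{statement}, not your proof): when $q(n)=Q$ one has $q'(n)=0$ and $I_0=(-\tfrac1Q,0)$, which should be read modulo~$1$; your description of $G_n$ then needs the obvious cyclic shift of the index $k$, but the counting and length estimates are unaffected.
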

\begin{proof}
  This lemma in a bit different form appeares elsewhere\autocite[Lemma
  2.13]{JW}, hence its proof is skipped here.
\end{proof}
\begin{lem}[label={lem:conze}]{}{}\rr
  Let $A=(a_n)$ be a sequence of integers which satisfies the
  following property:
 
  for every $C>0$, $\epsilon>0$ and $N_1\in \setN$, there exists a
  dynamical system $(X,\Sigma, \mu,T)$, a set $E\in \Sigma$ with
  $\mu(E)<\epsilon$, and an integer $N_2>N_1$ such that
  \begin{equation}
    \label{eq:denialinr}
    \mu\Big\{x\in X: \max_{N_2\leq N \leq N_2}\setA_{n\in [N]}\setone_E (T^{a_n}x)>1-\epsilon\Big\}\geq C\mu(E).
  \end{equation}
  Then the sequence $A=(a_n)$ is strong sweeping out.
\end{lem}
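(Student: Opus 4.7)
The plan is a Conze-style transference: upgrade the hypothesis, which only asserts the existence of ``bad'' witnesses in \emph{some} auxiliary systems, into a single strong sweeping out set inside any given aperiodic system $(X,\Sigma,\mu,T)$.

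\emph{Step 1 (Rokhlin transfer).} The witness in the hypothesis involves only the first $a_{N_2}$ iterates of the transformation, so it is a finitary piece of orbit data. The Rokhlin lemma supplies, in any aperiodic $(X,\Sigma,\mu,T)$, towers of any height $H \gg a_{N_2}$ and measure $\geq 1-\delta$. Transplanting the witness onto such a tower converts the assertion into one internal to $(X,\Sigma,\mu,T)$: for every $(C,\epsilon,N_1)$ there is $E' \in \Sigma$ with $\mu(E') \leq 2\epsilon$ and an integer $N_2 > N_1$ (I read the max in \cref{eq:denialinr} as $\max_{N_1\le N\le N_2}$, the obvious intent) such that
\[
\mu\Big\{x : \max_{N_1 \leq N \leq N_2} \setA_{n\in[N]}\setone_{E'}(T^{a_n}x) > 1-2\epsilon\Big\} \geq \tfrac{C}{2}\mu(E').
\]

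\emph{Step 2 (Two parallel disjoint families).} Fix $\epsilon>0$. Choose $\epsilon_k\downarrow 0$ with $\sum_k\epsilon_k<\epsilon/2$ and $C_k\uparrow\infty$ large enough that the exceptional measure at stage $k$ is at most $2^{-k}$. Iteratively, using Step~1, produce two families $(E_k)$ and $(F_k)$ of mutually disjoint sets with $\mu(E_k),\mu(F_k)<\epsilon_k$, and integer windows $[M_k,N_k]$ with $M_k>N_{k-1}$, such that
\[
G_k \cq \Big\{x : \max_{M_k\leq N\leq N_k}\setA_{n\in[N]}\setone_{E_k}(T^{a_n}x) > 1-\epsilon_k\Big\}
\]
and the analogously defined $H_k$ for $F_k$ both satisfy $\mu(G_k),\mu(H_k)\geq 1-2^{-k}$. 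Disjointness is arranged by placing the Rokhlin tower at stage $k$ inside the complement of the finitely many sets already chosen; the tower's near-full measure absorbs the cost.

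\emph{Step 3 (Strong sweeping out for $E$).} Let $E\cq\bigcup_k E_k$ and $F\cq\bigcup_k F_k$; both have measure $<\epsilon/2$ and $F\subseteq E^c$. Summability of $\sum_k 2^{-k}$ and the first Borel--Cantelli lemma give $\mu(\liminf_k G_k)=\mu(\liminf_k H_k)=1$. For $x$ in the corresponding full-measure set, infinitely many $k$ yield $N\in[M_k,N_k]$ with $\setA_{n\in[N]}\setone_E(T^{a_n}x)\geq\setA_{n\in[N]}\setone_{E_k}(T^{a_n}x)>1-\epsilon_k$, so $\limsup_N\setA_{n\in[N]}\setone_E=1$ a.e. The same reasoning applied to $F$ gives $\limsup_N\setA_{n\in[N]}\setone_F=1$ a.e., and $F\subseteq E^c$ then forces
\[
\liminf_N\setA_{n\in[N]}\setone_E = 1 - \limsup_N\setA_{n\in[N]}\setone_{E^c} \leq 1 - \limsup_N\setA_{n\in[N]}\setone_F = 0 \quad\text{a.e.}
\]
Hence $E$ has $\mu(E)<\epsilon$ and is strong sweeping out.

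\emph{Main obstacle.} The delicate part is Step~2: the hypothesis's quantitative factor $C$ must survive both the Rokhlin transfer (Step~1) and the restriction to a shrinking complement as the two disjoint families are built. This is precisely where the freedom to take $C$ arbitrarily large in \cref{eq:denialinr} is spent, budgeting a controlled fractional loss at each stage so that $C_k$ can be tuned to yield exceptional measure $\leq 2^{-k}$ and make the final Borel--Cantelli step succeed.
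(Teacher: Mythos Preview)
Your architecture (Rokhlin transfer, iterate with growing windows, Borel--Cantelli on two disjoint families) is the right shape, and it is more hands-on than the paper's proof, which simply invokes Calder\'on transference to move the denial of the maximal inequality to the integer shift and then cites the Akcoglu et al.\ theorem (strong sweeping out follows from an unbounded weak-type constant). But there is a genuine gap in your Step~2.

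You assert that $C_k$ can be chosen large enough to force $\mu(G_k)\ge 1-2^{-k}$. The hypothesis, however, only gives $\mu(G_k)\ge \tfrac{C_k}{2}\,\mu(E_k')$ after your Step~1, and nothing in the statement bounds $\mu(E_k')$ from below: the witness set is handed to you \emph{after} $C_k$ and $\epsilon_k$ are fixed, and it may well satisfy $\mu(E_k')=10^{-1000}$, rendering the lower bound on $\mu(G_k)$ worthless. The hypothesis controls the \emph{ratio} $\mu(G)/\mu(E)$, not the absolute size of $\mu(G)$; passing from ``ratio arbitrarily large'' to ``$\mu(G)$ close to $1$ with $\mu(E)$ still small'' is precisely the nontrivial amplification buried inside the Akcoglu et al.\ result that the paper cites. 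Concretely, one takes a single witness with $\mu(G)\ge d$ and $\mu(E)\le d/C$, then places roughly $m\approx d^{-1}\log(1/\gamma)$ independent Rokhlin-tower copies of it; the union of the $E$-copies has measure $\lesssim m\,d/C=\log(1/\gamma)/C$, while the complement of the union of the $G$-copies has measure $\lesssim(1-d)^m\le\gamma$. Only after this amplification does one have the input your Step~2 actually uses. Your ``Main obstacle'' paragraph locates the difficulty in losses under the Rokhlin transfer and under restriction to complements, but that is secondary bookkeeping; the missing idea is this amplification step.
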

\begin{proof}
  A version of this lemma appears elsewhere\autocite[Theorem 3.1]{M}
  already with detailed proof, so we will just outline the proof here.

  First, by using Calderon's transference
  principle\autocite{Calderon}, one can prove that if we have a
  maximal inequality on $\setR$ (with respect to the transformation
  $\tau(n)=n+1$), then the maximal inequality transfers to any
  dynamical system with the same constant. Hence the hypothesis of
  \Cref{lem:conze} implies that there is a denial of a maximal
  inequality on $\setR$. Now, we invoke Akcoglu etal's
  result\autocite[Theorem 2.3]{ABJLRW} to finish the proof of this
  theorem.
\end{proof}
\begin{proof}[Proof of \Cref{thm:1}]

  Let $C>0$, $N_1\in \setN$, and $\epsilon>0$. By \cref{lem:conze}, it
  will be sufficient to find a set $E$ in $\setT^K$ with
  $\lambda^{(K)}(E)<\epsilon$, and an integer $N_2>N_1$ which satisfy
  \cref{eq:denialinr}. Here $\setT^K$ denotes the $K$-dimensional
  torus and $\lambda^{(K)}$ means the Haar-Lebesgue measure on
  $\setT^K$.
  
  Let $N> N_1$ be a very large positive integer. Since,
  \begin{equation*}
    a_n \neq a_m \text { if } n\neq m
  \end{equation*}
  so $(a_n)$ can also be considered as a set.  Since, for
  $x\in (0,1)$, we have $e^x= 1+x+O(x^2)$, we can rewrite the given
  condition as
  \begin{equation}
    \frac{a_{n+1}}{a_n}> e^\frac{1}{(\log\log N)^{1-\eta}} \text{ for }n\in [N].
  \end{equation}
  Let us choose a natural number $Q=Q(N)$ which just needs to go to
  $\infty$ as $N\to \infty.$ Choose another integer $K=K(N)$ large
  enough so that
  \begin{equation}
    \label{eq:223}
    e^{\frac{K}{(\log\log N)^{1-\eta}}}>2Q.
  \end{equation}
  This implies
  \begin{equation}
    \label{eq:24}
    \frac{a_{n+K}}{a_n}>2Q \text{ for } n\in[N].
  \end{equation}
  For any $k\in [K]$, define
  \begin{equation}
    \label{eq:25}
    A_k:=\big\{a_n:n\equiv k\ (\text{mod }K) \text{ and } n\in[N] \big\}.
  \end{equation}
  Observe that for each $k\in [K]$, $A_k$ satisfies the hypothesis of
  \cref{lem:1}.  That means each $A_k$ has the property that if
  it is partitioned into $Q$ sets, so
  $A_k =\displaystyle\cdot\hspace{-13pt}\bigcup_{q \in [Q]}A_{k,q}$
  then there is an irrational number $r_k$ so that
  \begin{equation}
    \label{eq:14}
    r_k a_n \in I_{Q-q} \ \ (\text{mod } 1) \text{ for all } a_n \in A_{k,q},\  q\in [Q],
  \end{equation}
  where
  \begin{equation}
    I_q:=\big(\frac{q-1}{Q}, \frac{q}{Q}\big).
  \end{equation}

  The above partition $A_1, A_2, . . . , A_K$ of $(a_n)_{n\in[N]}$
  naturally induces a partition of the index set $[N]$ into $K$ index
  sets $\mathcal N_k, k\in [K]$.  We then have

  \begin{equation}
    \label{eq:13}
    \setA_{n\in J} f(T^{a_n}x)=\frac{1}{\#J}\sum_{k\in [K]}\sum_{n \in J\cap \mathcal {N}_k} f(T^{a_n}x).
  \end{equation}

  The space of action is the $K$ dimensional torus ${\mathbb T}^K$,
  subdivided into little $K$ dimensional cubes $C$ of the form
  \begin{equation}
    \label{eq:15}
    C=I_{q(1)} \times I_{q(2)} \times......\times I_{q(k)} \text{ for some } {q(k)}\leq Q \text{ for } q\leq Q.
  \end{equation}

  At this point it is useful to introduce the following vectorial
  notation to describe these cubes $C$. For a vector
  $\mathbf {q}=\big(q(1), q(2), . . . , q(K)\big)$ with
  $q(k) \in [Q]$, define
  \begin{equation}
    \label{eq:16}
    I_{\mathbf {q}} := I_{q(1)} \times I_{q(2)} \times \dots \times  I_{q(K)}.
  \end{equation}

  Since each component $q(k)$ can take up the values $1,2, \dots ,Q$,
  we divided $\setT^Q$ into $Q^k$ cubes.  We also consider the ``bad''
  set $E$ defined by

  \begin{equation}
    \label{eq:17}
    E:= \cup_{k\leq K} (0,1)\times(0,1)\times\dots \times \underbrace{\big(I_1\cup I_2\big)}_\text{k-th coordinate}\times\dots \times (0,1).
  \end{equation}

  Defining the set $E_k$ by
  \begin{equation}
    \label{eq:18}
    E_k:= (0,1)\times (0,1) \times \dots \times \underbrace{\big(I_1\cup I_2\big)}_\text{k-th coordinate}\times \dots \times (0,1)
  \end{equation}
  we have
  \begin{equation}
    \label{eq:119}
    E=\cup_{k\leq K}E_k  \text{ and } \lambda^{(K)}(E_k)\leq \frac{2}{Q} \text{ for every } k\leq K 
  \end{equation}

  By \cref{eq:119}, we have
  \begin{equation}
    \lambda^{(K)}(E)\leq \frac{2K}{Q}.
  \end{equation}

  Since we want the measure of the ``bad set'' to be smaller and
  smaller, we must assume
  \begin{equation}
    \label{eq:121}
    K<< Q.
  \end{equation}

  Now the idea is to have averages that move each of the cubes into
  the support of the set $E$.  The $2$-dimensional version of the
  process is illustrated in \Cref{fig:grid}.
  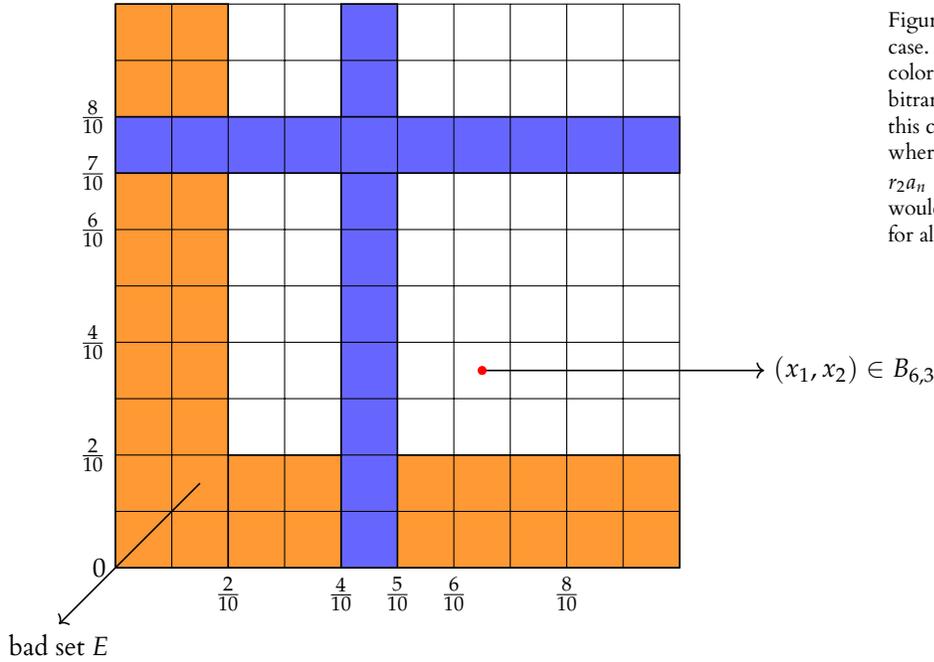
\begin{figure}[h!]
    
    \begin{tikzpicture}[scale=.75]
		
  \tkzDefPoints{0/0/O, 10/10/M}
  
  \tkzDefPoints{0/2/f2, 0/4/f4, 0/6/f6, 0/7/f7, 0/8/f8, 0/10/f10,
    2/0/h2, 4/0/h4, 5/0/h5, 6/0/h6, 8/0/h8, 10/0/h10}

  \tkzDefPoints{4/10/p4, 5/10/p5, 10/7/p7, 10/8/p8}
  \tkzDefPoints{10/2/g102, 2/10/g210}
  \tkzDefPoints{1.5/1.5/es, 6.5/3.5/bs}
  \tkzDefPoints{-1/-1/ee, 11.5/3.5/be}

  \tkzLabelPoint[left](O){$0$}
  \tkzLabelPoint[left](f2){$\frac2{10}$}
  \tkzLabelPoint[left](f4){$\frac4{10}$}
  \tkzLabelPoint[left](f6){$\frac6{10}$}
  \tkzLabelPoint[left](f7){$\frac7{10}$}
  \tkzLabelPoint[left](f8){$\frac8{10}$}

  \tkzLabelPoint[below](h2){$\frac2{10}$}
  \tkzLabelPoint[below](h4){$\frac4{10}$}
  \tkzLabelPoint[below](h5){$\frac5{10}$}
  \tkzLabelPoint[below](h6){$\frac6{10}$}
  \tkzLabelPoint[below](h8){$\frac8{10}$}

  \tkzDrawPolygon[fill=orange!80](O,h10,g102,f2)
  \tkzDrawPolygon[fill=orange!80](O,f10,g210,h2)
  \tkzDrawPolygon[fill=blue!60](h4,h5,p5,p4)
  \tkzDrawPolygon[fill=blue!60](f7,f8,p8,p7)
  \draw (O) grid (M);

  \tkzDrawSegments[->](es,ee bs,be)
  \tkzLabelPoint[below](ee){bad set $E$}
  \tkzLabelPoint[right](be){$(x_1,x_2)\in B_{6,3}$}
  \tkzDrawPoint[red](bs)
\end{tikzpicture}
    \caption[-5cm]{Illustration of the 2-dimensional case.  Here the
      ``bad set'' E is the orange colored region. Let $(x_1,x_2)$ be an
      arbitrary point (which belongs to $B_{6,3}$ in this case). We
      need to look at an average where
      $r_1a_n\in (\frac{4}{10},\frac{5}{10})$ for all $n\in {A_1}$ and
      $r_2a_n\in (\frac{7}{10},\frac{8}{10})$ for all $n\in {A_2}.$
      Then it would give us $(x_1,x_2)+(r_1a_n,r_2a_n)\in E$ for all
      $n\in {A_1}\cup {A_2}.$ }
    \label{fig:grid}
  \end{figure}

  Since we have $Q^K$ cubes, we need to have $Q^K$ averages
  $\setA_{J_i}$. This means we need to have $Q^K$ disjoint intervals
  $J_i$ of indices. The length of these intervals $J_i$ needs to be
  ``significant'', in comparison with $J_{i-1}$. For our purpose,
  $J_i = (2^{N_1+i}, 2^{N_1+i+1})$ will be suitable. This means we
  need to have $Q^K$ exponents to be available, which implies that
  \begin{equation}
    \label{eq:23}
    N\geq 2 ^{Q^K}.
  \end{equation}
  For simplicity, we assume that
  \begin{equation}
    \label{eq:124}
    N=2^{Q^K}
  \end{equation}
  To make our plan work, first let us check that we can really choose
  such $K(N)$ which satisfies the condition \cref{eq:121},
  \cref{eq:124} and \cref{eq:223}.  Let us write \cref{eq:223} as
  \begin{equation}
    \label{eq:126}
    \frac{K}{(\log\log N)^{1-\eta}}> \log 2Q.
  \end{equation}
  Using the assumption $N=2^{Q^K}$, the condition in \cref{eq:126}
  becomes
  \begin{equation}
    \label{eq:27}
    \frac{K}{(K\log Q)^{1-\eta}}> \log 2Q.
  \end{equation}
  After rearranging and ignoring the difference between $\log Q$ and
  $\log 2Q$, we get
  \begin{equation}
    \label{eq:28}
    K^\eta>\big(\log Q\big)^{2-\eta},
  \end{equation}
  which we can simplify a bit generously to
  \begin{equation}
    \label{eq:29}
    K>(\log Q)^\frac{2}{\eta}.
  \end{equation}
  We certainly can choose $K$ to satisfy eq. \cref{eq:29} and also
  make sure that $\frac{K(N)}{Q(N)}$ goes to $0$ and $N\to \infty.$
  Choose $N_2$ large enough so that $K(N_2)$ and $Q(N_2)$ satisfy
  \cref{eq:29} and the following:
  \begin{equation}
    \frac{2K(N_2)}{Q(N_2)}<\min\{\epsilon,\frac1{C}\}.
  \end{equation}
  So we have $Q^K$ cubes $C_i, i \in [Q^K]$ and $Q^K$ intervals
  $J_i,i \in [Q^K]$. We match $C_i$ with $J_i$. We know that $C_i$ is
  of the form
  \begin{equation}
    \label{eq:125}
    C_i= I_{\mathsf{q}_i},
  \end{equation} 
  for some $K$ dimensional vector
  $\mathsf{q}_i= \big(q_i(1),q_i(2),\dots q_i(K)\big) $ with
  $q_i(k)\in [Q]$ for every $k\in [K].$ The interval $J_i$ is
  partitioned as
  \begin{equation}
    \label{eq:26}
    J_i=\displaystyle \bigcup_{k\in[K]}(J_i\cap \mathcal{N}_k).
  \end{equation}
  For a given $k\in [K]$, let us define the set of indices
  $\mathcal{N}_{k,q}$ for $q\leq Q$, by
  \begin{equation}
    \label{eq:127}
    \mathcal{N}_{k,q}:= \displaystyle\bigcup_{i\leq Q^K, q_i(k)=q} (J_i\cap \mathcal{N}_k)
  \end{equation}
  Since the sets $A_{k,q}:= \big\{a_n: n\in \mathcal{N}_{k,q}\big\}$
  form a partition of $A_k$, by the argument above \cref{eq:14}, there
  is a real number $r_k$ so that
  \begin{equation}
    \label{eq:228}
    r_k a_n\in I_{Q-q}\ \ (\text{ mod }1) \text{ for } n\in \mathcal{N}_{k,q} \text{ and } q\leq Q.
  \end{equation}

  Define the transformation $T$ on the $K$ dimensional torus $\setT^K$
  by
  \begin{equation}
    \label{eq:116}
    T(x_1,x_2,\dots, x_K):= (x_1+r_1t,x_2+r_2t,\dots , x_K+r_Kt).
  \end{equation}
  We claim that
  \begin{equation}
    \label{eq:117}
    \Big\{x|\max_{i\in [Q^K]} \setA_{n \in J_i}\setone_E(T^{a_n}x)=1\Big\}= \setT ^K 
  \end{equation}

  Indeed, let $x\in C_i$ and consider the averages $\setA_{J_i}$. Let
  us write
  \begin{equation}
    \label{eq:118}
    \setA_{n \in J_i}\setone_E(T^{a_n}x)=\frac{1}{\# J_i}\sum_{k\leq {K}}\sum_{n\in J_i\cap \mathcal{N}_{k}}\setone_E(T^{a_n} {x}) 
  \end{equation}
  \begin{equation}
    \label{eq:19}
    =\frac{1}{\# J_i}\sum_{k\leq {K}}\sum_{n\in J_i\cap \mathcal{N}_{k}}\setone_E(x_1+r_1a_n,x_2+r_2a_n,\dots ,x_K+r_Ka_n).
  \end{equation}
  We claim that for each $k\in [K]$
  \begin{equation}
    \label{eq:20}
    (x_1+r_1a_n,x_2+r_2a_n,\dots x_k+r_ka_n,\dots, x_K+r_Ka_n)\in  E_k \text{ if }n\in J_i\cap \mathcal{N}_k
  \end{equation}

  Since, $E_k \subset E$, we would have
  \begin{equation}
    \setone_E(x_1+r_1a_n,x_2+r_2a_n,\dots x_k+r_ka_n,\dots, x_K+r_Ka_n)=1
    \text { if } n\in J_i\cap \mathcal{N}_k \end{equation}
  which would mean
  \begin{equation}
    \setone _E(T^{a_n}x)=1 \text{ for all } n\in J_i.
  \end{equation}

  So let us prove \cref{eq:20}.  Since $x\in C_i= I_{\mathsf{q}}$ we
  have $x_k\in I_{q_i(k)}$ for every $k$. By the definition of $r_k$
  in eq. (\cref{eq:228}) we have $r_ka_n\in I_{Q-q_i(k)}$ if
  $n\in J_i\cap \mathcal{N}_k$.  It follows that
  \begin{equation}
    x_k+r_ka_n\in I_{q_{i(k)}}+I_{Q-q_i(k)} \text{ if } n\in J_i\cap\mathcal{N}_k.
  \end{equation}
  Since $I_{q_{i(k)}}+I_{Q-q_{i(k)}}\subset I_1\cup I_2$, we get
  \begin{equation*}
    x_k+r_ka_n\in I_1\cup I_2 \text{ if } n\in J_i\cap \mathcal{N}_k.
  \end{equation*}
  By the definition of $E_k$ in \cref{eq:18}, this implies that

\begin{equation*}
  (x_1+r_1a_n,x_2+r_2a_n,\dots x_k+r_ka_n,\dots, x_K+r_Ka_n)\in  E_k 
\end{equation*}
as claimed.
\end{proof}

\begin{remarks}
  \begin{enumerate}
  \item Sharpness: We can prove \Cref{thm:1} by replacing the
    assumption \cref{eq:2} with
    $\frac{a_{n+1}}{a_n}>1+\frac{(\log\log\log n)^{2+\eta}}{\log\log
      n}$, $\eta>0.$
  \item \Cref{thm:1} can also be proved by using
    \autocite[Theorem 3.1]{PS}. By applying this result, one can
    slightly weaken the hypothesis. More precisely, we can prove that
    any sequence $(a_n)$ satisfying
    $\frac{a_{n+1}}{a_n}>1+\frac{(\log\log\log n)^{1+\eta}}{\log\log
      n}$, $\eta>0$ is strong sweeping out.
  \end{enumerate}
\end{remarks}

We can generalize \Cref{thm:1} for weighted ergodic averages
in the following way:

 \begin{thm}[label={thm:4}]{}{}\rr
   Let $(w(n))$ be a sequence of real numbers from the interval
   $(0,1]$ and denote $G(n):= \sum_{i\in [n]} w(i)$.  
   Suppose $(a_n)$ is a sequence of integer which satisfies
   \begin{equation}
     \label{eq:42}
     \frac{a_{n+1}}{a_n}\geq 1+ {\frac1{(\log \log G(n))^{1-\eta}}}
   \end{equation}
   for some $\eta>0$. Then $(a_n)$ satisfies the strong sweeping out
   property for the $w$-weighted averages $\setA^w_{n\in [N]}f(T^{a_n}x)$.
	
 \end{thm}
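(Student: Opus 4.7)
The plan is to run the grid-method argument from the proof of \Cref{thm:1} verbatim, with counting measure replaced by the weighted measure $G$. A first observation is that the hypothesis here is in fact stronger than that of \Cref{thm:1}: since $w(n)\le 1$ we have $G(n)\le n$, hence $(\log\log G(n))^{1-\eta}\le (\log\log n)^{1-\eta}$, so the lacunarity gap is at least as wide as before. In particular, the consequence $\frac{a_{n+K}}{a_n}>2Q$ derived from $K>(\log Q)^{2/\eta}$ still holds, and \Cref{lem:1} applies to each residue-class set $A_k$. As a preliminary step I would upgrade \Cref{lem:conze} to the weighted setting: the same Calderon transference argument turns a denial of the maximal inequality on $\setR$ into a denial of the weighted maximal inequality in any dynamical system, and the theorem of Akcoglu et al.\ then delivers strong sweeping out for $\setA^w_{n\in[N]}\setone_E(T^{a_n}x)$.

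The essential modification is the choice of the $Q^K$ index intervals $J_i$. Rather than the dyadic choice $J_i=(2^{N_1+i},2^{N_1+i+1}]$, I would pick integers $M_0<M_1<\cdots<M_{Q^K}$ with $G(M_{i+1})\ge(1/\epsilon)\,G(M_i)$ and set $J_i=(M_i,M_{i+1}]$, so that
\[
G(J_i)=G(M_{i+1})-G(M_i)\ge(1-\epsilon)\,G(M_{i+1}).
\]
Having $Q^K$ such intervals available requires $G(N)\ge\epsilon^{-Q^K}$, i.e.\ $\log G(N)\asymp Q^K$. Substituting into the lacunarity hypothesis gives $\frac{K}{(K\log Q)^{1-\eta}}>\log 2Q$, which simplifies exactly as in \Cref{thm:1} to $K>(\log Q)^{2/\eta}$, and one can again arrange $K(N),Q(N)\to\infty$ with $K/Q\to 0$.

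With $K$ and $Q$ chosen, the remainder of the grid construction --- \Cref{lem:1}, the irrationals $r_1,\ldots,r_K$, the cubes $C_i=I_{\mathsf{q}_i}$, the bad set $E\subset\setT^K$ with $\lambda^{(K)}(E)\le 2K/Q<\epsilon$, and the torus rotation $T(x_1,\ldots,x_K)=(x_1+r_1t,\ldots,x_K+r_Kt)$ --- is unchanged, and so is the geometric argument showing $T^{a_n}x\in E$ for every $x\in C_i$ and every $n\in J_i$. The only new point is the endgame: for $x\in C_i$,
\[
\setA^w_{n\in[M_{i+1}]}\setone_E(T^{a_n}x)\ge\frac{1}{G(M_{i+1})}\sum_{n\in J_i}w(n)=\frac{G(J_i)}{G(M_{i+1})}\ge 1-\epsilon,
\]
so $\max_{i\in[Q^K]}\setA^w_{n\in[M_{i+1}]}\setone_E(T^{a_n}x)\ge 1-\epsilon$ on all of $\setT^K$, and the weighted \Cref{lem:conze} closes out the argument. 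The one point to verify carefully will be that the Calderon transference and the Akcoglu et al.\ step go through uniformly for bounded weights $w(n)\in(0,1]$; I expect this to be a routine adaptation, so the real work is the parameter bookkeeping above.
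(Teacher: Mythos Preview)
Your proposal is correct and follows essentially the same route as the paper: the paper's proof simply says to rerun the argument of \Cref{thm:1} with $\setA_J$ replaced by $\setA^w_J$ and with the intervals taken as $J_i=\big(G^{-1}(2^{N_1+i}),\,G^{-1}(2^{N_1+i+1})\big)$, which is exactly your idea of choosing $M_i$ so that $G$ grows geometrically across the $J_i$. Your version with ratio $1/\epsilon$ rather than $2$ is a harmless variant that makes the passage from $\setA^w_{J_i}=1$ to $\setA^w_{[M_{i+1}]}\ge 1-\epsilon$ explicit, and your remark that \Cref{lem:conze} needs a weighted analogue is a detail the paper leaves implicit.
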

 \begin{proof}
   The proof of \Cref{thm:1} will work in this case after the
   following obvious modifications.  Because of changing the summation
   method, instead of $\setA_J$, we have to work with the averages
   $\setA^w_J$.  Accordingly, the length of $J_i$ also has to be
   changed. A suitable choice of $J_i$ in this case would be
   following:
   \begin{equation}
     J_i=\Big(G^{-1}(2^{N_1+i}),G^{-1}(2^{N_1+i+1})\Big)
   \end{equation}
   Now, one can reiterate the argument given in \Cref{thm:1} to get
   the desired conclusion.
 \end{proof}
 
 \begin{cor}[label={cor:1}]{}{}\rr
   If $(a_n) $ is a sequence of integers satisfying
   $\frac{a_{n+1}}{a_n}\geq 1+{\frac1{(\log \log \log n)^{1-\eta}}}$
   for some $\eta>0$, then $({a_n})$ is strong sweeping out for the
   logarithmic averages
   $\setA^{1/n}_{n\in [N]}f(T^{a_n}x)= \dfrac{1}{\log N} \sum_{n\in
     [N]}\frac{1}{n}f(T^{a_n}x)$.
 \end{cor}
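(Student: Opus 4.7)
The plan is to derive the corollary directly from \Cref{thm:4} by specializing to the weight sequence $w(n)\cq 1/n$, which lies in $(0,1]$. With this choice, the $w$-weighted average in \cref{eq:8} becomes $\setA^w_{n\in[N]}f(T^{a_n}x)=\frac{1}{H_N}\sum_{n\in[N]}\frac{1}{n}f(T^{a_n}x)$, where $H_N$ is the $N$th harmonic number. Since $H_N/\log N\to 1$, the $\limsup$ and $\liminf$ of $\setA^w_{n\in[N]}\setone_E(T^{a_n}x)$ agree a.e.\ with those of $\frac{1}{\log N}\sum_{n\in[N]}\frac{1}{n}\setone_E(T^{a_n}x)$; hence strong sweeping out for the $\setA^w$ averages is equivalent to strong sweeping out for the logarithmic averages appearing in the statement.

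It then remains to check that the hypothesis of the corollary entails the growth condition \cref{eq:42} of \Cref{thm:4} for some $\eta'>0$. With $w(n)=1/n$ one has $G(n)=H_n=\log n+O(1)$, so $\log G(n)=\log\log n+o(1)$ and $\log\log G(n)=\log\log\log n+o(1)$, and therefore $(\log\log G(n))^{1-\eta'}\sim(\log\log\log n)^{1-\eta'}$ for any fixed $\eta'>0$. Given the hypothesis with exponent $\eta>0$, I would fix any $\eta'\in(0,\eta)$; the positive gap $\eta-\eta'$ forces $(\log\log\log n)^{1-\eta}=o\bigl((\log\log G(n))^{1-\eta'}\bigr)$, so for all sufficiently large $n$,
\[
\frac{a_{n+1}}{a_n}\;\geq\;1+\frac{1}{(\log\log\log n)^{1-\eta}}\;\geq\;1+\frac{1}{(\log\log G(n))^{1-\eta'}}.
\]
This is exactly \cref{eq:42} with parameter $\eta'$, so \Cref{thm:4} applies and yields the strong sweeping out property for the $w$-weighted averages, and hence, by the first paragraph, for the logarithmic averages.

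The only obstacle is the iterated-logarithm bookkeeping in the second step, which is routine. Since strong sweeping out depends only on the tail of $(a_n)$, the fact that the comparison between $\log\log G(n)$ and $\log\log\log n$ holds only for sufficiently large $n$ causes no difficulty.
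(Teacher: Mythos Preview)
Your proposal is correct and follows exactly the intended route: the paper presents \Cref{cor:1} as an immediate consequence of \Cref{thm:4} with the choice $w(n)=1/n$, so that $G(n)=H_n\sim\log n$ and hence $\log\log G(n)\sim\log\log\log n$. Your additional care in passing from the harmonic normalization $1/H_N$ to $1/\log N$ and in using a slightly smaller $\eta'$ to absorb the asymptotic error is appropriate and fills in details the paper leaves implicit.
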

 Now, we will prove \cref{thm:3}.  For any sequence $A$ of integers,
 we define $A(t):=\{n\in A: n\leq t\}.$ Observe that
 \Cref{thm:3} will follow from \Cref{thm:1} and the following
 lemma.

\begin{lem}[label={lem:5}]{}{}\rr
  Under the hypothesis of \Cref{thm:3}, for a.e. $\omega$,
  there exists a subsequence $B^\omega=(b_n(\omega))$ of
  $A^\omega=(a_n(\omega))$ such that the following holds:
  \begin{align}
    \label{eq:61}
    \lim_{t\to \infty} \frac{B^\omega(t)}{A^\omega(t)}&=1 \\
    \label{eq:62}
    \frac{b_{n+1}(\omega)}{b_n(\omega)}&> e^{\frac{1}{(\log \log n)^{1-{\eta/2}}}}
  \end{align}
\end{lem}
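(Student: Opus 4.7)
The plan is to construct $B^\omega$ by greedy selection from $A^\omega$. Set $b_1(\omega) := a_1(\omega)$, and having chosen $b_k(\omega)$, define
\[
b_{k+1}(\omega) := \min\bigl\{a \in A^\omega : a \geq b_k(\omega)\, e^{g(k)}\bigr\}, \qquad g(k) := \frac{1}{(\log\log k)^{1-\eta/2}}.
\]
The growth condition \cref{eq:62} is then automatic, so everything reduces to the density condition \cref{eq:61}. Writing $i_k$ for the $A^\omega$-index of $b_k$ (so $b_k = a_{i_k}(\omega)$), one has $B^\omega(b_k) = k$ and $A^\omega(b_k) = i_k$, and it suffices to show that the discard count $i_k - k$ is $o(k)$ almost surely.

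First I would pin down the almost-sure growth of $A^\omega$. The counting function $A^\omega(t) = \sum_{n \leq t} Y_n$ is a sum of independent Bernoullis with mean $\Sigma(t) := \sum_{n \leq t} \sigma_n$, and the substitution $u = \log x$ gives $\Sigma(t) \sim (\log t)(\log\log\log t)^{1-\eta}$. A standard Bernstein plus Borel--Cantelli argument along $t = 2^m$ then produces $A^\omega(t) \sim \Sigma(t)$ almost surely, so in particular the $k$-th element satisfies $\log a_k(\omega) \sim k/(\log\log k)^{1-\eta}$.

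The heart of the argument is to control $D_k := i_{k+1} - i_k - 1$, the number of $a_n(\omega)$ lying in the excluded window $(b_k, b_k e^{g(k)}]$. With respect to the natural filtration $\mathcal{F}_{k-1}$ in which $b_k$ is measurable, $D_k$ is conditionally a sum of independent Bernoullis, and
\[
\mathbb{E}\bigl[D_k \mid \mathcal{F}_{k-1}\bigr] \leq \sum_{b_k < n \leq b_k e^{g(k)}} \sigma_n \leq C\, g(k)\, (\log\log\log b_k)^{1-\eta} \approx \frac{1}{(\log\log k)^{\eta/2}},
\]
the last estimate using $\log\log\log b_k \approx \log\log k$. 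Summing over $k \leq K$ yields a total conditional expected discard of $O\bigl(K/(\log\log K)^{\eta/2}\bigr) = o(K)$, and applying a martingale Bernstein inequality to $M_K := \sum_{k \leq K}(D_k - \mathbb{E}[D_k \mid \mathcal{F}_{k-1}])$ together with Borel--Cantelli along $K = 2^j$ upgrades this to the pathwise statement $\sum_{k \leq K} D_k = o(K)$ almost surely, which is \cref{eq:61}.

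The hard part is the apparent circularity: the estimate of $\mathbb{E}[D_k \mid \mathcal{F}_{k-1}]$ invokes the scale of $b_k$, while that scale is itself governed by the discards. I would resolve this by a bootstrap. The trivial inequality $b_k \geq a_k(\omega)$, combined with the preparatory step, gives $\log\log\log b_k \geq (1+o(1))\log\log k$; and the crude a priori bound $i_k \leq 2k$, which holds on a set of arbitrarily high probability once $\mathbb{E}[D_k]\to 0$ is in hand, gives $b_k \leq a_{2k}(\omega)$ and hence $\log\log\log b_k \leq (1+o(1))\log\log k$. With this two-sided envelope in place the discard estimate becomes rigorous, completing the proof.
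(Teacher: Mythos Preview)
Your greedy-selection strategy is genuinely different from the paper's construction. The paper fixes a \emph{deterministic} grid $v_n=e^{n(\log\log n)^{-1+\eta/2}}$, sets $I_n=[v_n,v_{n+1})$, and obtains $B^\omega$ by deleting from $A^\omega$ all elements lying in an interval $I_n$ that either contains another $A^\omega$-element or is followed by a nonempty $I_{n+1}$. Because the grid is deterministic, the crucial estimate $\sum_{u\in I_n}\sigma_u\approx(\log\log n)^{-\eta/2}\to 0$ is a purely analytic fact, and the density claim reduces to bounding $\mathbb{E}D^\omega(t)$ and $\mathbb{E}E^\omega(t)$ by sums of products of independent block-variables $X_k=\sum_{n\in I_k}Y_n$. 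Your approach reaches the \emph{same} numerical estimate $g(k)(\log\log\log b_k)^{1-\eta}\approx(\log\log k)^{-\eta/2}$, but because your windows $(b_k,b_k e^{g(k)})$ are random you are forced into a conditional-martingale framework and the bootstrap you describe. What the paper's route buys is precisely the elimination of that circularity; what yours buys is that \cref{eq:62} is automatic rather than needing the side argument that the interval index $n$ dominates the $B^\omega$-index $k$.

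On the bootstrap itself: as written it is still circular. You assert $i_k\le 2k$ ``once $\mathbb{E}[D_k]\to 0$ is in hand,'' but your bound on $\mathbb{E}[D_k\mid\mathcal F_{k-1}]$ already uses $b_k\le a_{2k}$, i.e.\ $i_k\le 2k$. The standard repair is to introduce the stopping time $\tau=\inf\{k:i_k>2k\}$, run the martingale Bernstein argument on the truncated increments $D_k\mathbf 1_{\{k<\tau\}}$ (for which the conditional-mean bound is legitimate), and then deduce $\tau=\infty$ almost surely from the resulting concentration. This is routine but does need to be said; the sentence you wrote does not yet break the loop. Note also that an a~priori upper bound on $b_k$ via gap estimates for $A^\omega$ is not available here: with $\sigma_n=(\log\log\log n)^{1-\eta}/n$ the probability that a given dyadic block misses $A^\omega$ is not summable, so the stopping-time device (or something equivalent) really is required.
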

\begin{proof}
  Let $u_n= \min \{t| \displaystyle\sum_{k\leq t}\sigma_k \geq
  n\}$. First observe that $u_n\sim e^{n(\log \log n)^{-1+\eta}}. $

  By the strong law of large numbers, we have for a.e. $\omega$,

  $\displaystyle\lim_{n\to \infty}
  \frac{A^\omega(u_n)}{\displaystyle\sum_{u\leq u_n} \sigma_u}=1
  \text{ which implies that\ \ } \lim_{n\to \infty}
  \frac{A^\omega(u_n)}{n}=1.$

  Clearly,
  $\frac{u_{n+1}}{u_n}\sim e^\frac{1}{(\log \log n)^{1-\eta}}.$
  However, this does not imply that
  $\frac{a_{n+1}(\omega)}{a_n(\omega)}> e^{\frac{1}{(\log \log
      n)^{1-\eta}}}$.

  \vspace{.5cm}
  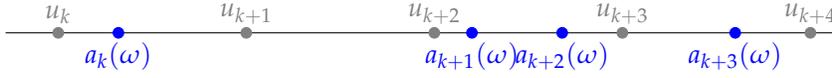
\begin{figure}

    \caption{An example where $A^\omega$ may not satisfy the condition
      \cref{eq:62}}
    \label{fig:1}
    \begin{tikzpicture}
	
      \draw (-6,0) -- (5,0); \filldraw [gray] (-5.3,0) circle (2pt)
      node[anchor=south] {$u_k$}; \filldraw [blue] (-4.5,0) circle
      (2pt) node[anchor=north] {$a_k(\omega)$}; \filldraw [gray]
      (-2.8,0) circle (2pt) node[anchor=south] {$u_{k+1}$}; \filldraw
      [blue] (.2,0) circle (2pt) node[anchor=north]
      {$a_{k+1}(\omega)$}; \filldraw [blue] (1.4,0) circle (2pt)
      node[anchor=north] {$a_{k+2}(\omega)$}; \filldraw [gray]
      (-0.3,0) circle (2pt) node[anchor=south] {$u_{k+2}$}; \filldraw
      [gray] (2.2,0) circle (2pt) node[anchor=south] {$u_{k+3}$};
      \filldraw [blue] (3.7,0) circle (2pt) node[anchor=north]
      {$a_{k+3}(\omega)$}; \filldraw [gray] (4.7,0) circle (2pt)
      node[anchor=south] {$u_{k+4}$};
 \end{tikzpicture}
  \end{figure}
  \vspace{1cm}
 
  So, we need to modify our sequence.

  Let $v_n=e^{n(\log \log n)^{-1+\frac{\eta}{2}}}$ and
  $I_n=[v_n,v_{n+1})\cap \mathbb{N}$.  The properties of $(v_n)$, that
  we shall use here are the following:
 \begin{equation}
    \label{eq:64}
\frac{v_{n+1}}{v_n}\geq e^{\frac{1}{(\log \log n)^{1-\frac{\eta}{2}}}} \text{ and } \lim_{n\to \infty} \sum_{u\in I_n}\sigma_u=0.
  \end{equation}

  Let
  $D^\omega= (d_n(\omega)):= \{d: d\in I_n\cap A^\omega \text{ for
    some } n \in \mathbb {N} \text{ satisfying } I_{n+1}\cap A^\omega
  \neq \emptyset\}$.

  And
  $E^\omega= (e_n(\omega)):= \displaystyle\bigcup_{n\in
    \setN}\{I_n\cap A^\omega: |I_n\cap A^\omega|>1\}$.

  In \Cref{fig:2}, $a_{k+1}(\omega)\in D^{\omega}$ because
  $I_{k+1}\cap A^{\omega} \neq \emptyset$.  And
  $a_{k+3}(\omega), a_{k+4}(\omega) \in E^\omega.$

 \vspace{1cm}
  \begin{figure}

    \begin{tikzpicture}
      \caption{Construction of $D^\omega$ and $E^{\omega}$}
      \label{fig:2}
      \draw (-6,0) -- (5.2,0); \filldraw [gray] (-5.5,0) circle (2pt)
      node[anchor=south] {$v_k$}; \filldraw [teal] (-4.5,0) circle
      (2pt) node[anchor=north] {$a_k(\omega)$}; \filldraw [gray]
      (-4,0) circle (2pt) node[anchor=south] {$v_{k+1}$}; \filldraw
      [red] (-3,0) circle (2pt) node[anchor=north]
      {$a_{k+1}(\omega)$}; \filldraw [teal] (-1.5,0) circle (2pt)
      node[anchor=north] {$a_{k+2}(\omega)$}; \filldraw [gray]
      (-2.5,0) circle (2pt) node[anchor=south] {$v_{k+2}$}; \filldraw
      [gray] (-1,0) circle (2pt) node[anchor=south] {$v_{k+3}$};
      \filldraw [red] (.65,0) circle (2pt) node[anchor=north]
      {$a_{k+3}(\omega)$}; \filldraw [gray] (.5,0) circle (2pt)
      node[anchor=south] {$v_{k+4}$}; \filldraw [gray] (2,0) circle
      (2pt) node[anchor=south] {$v_{k+5}$}; \filldraw [teal] (4.5,0)
      circle (2pt) node[anchor=north] {$a_{k+5}(\omega)$}; \filldraw
      [gray] (3.5,0) circle (2pt) node[anchor=south] {$v_{k+6}$};
      \filldraw [red] (1.9,0) circle (2pt) node[anchor=north]
      {$a_{k+4}(\omega)$}; \filldraw [gray] (5,0) circle (2pt)
      node[anchor=south] {$v_{k+7}$};
	
    \end{tikzpicture}
  \end{figure}
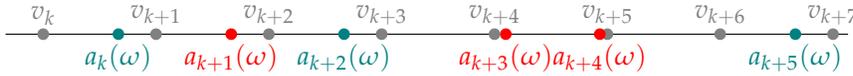
  \vspace{1cm}

  Define $B^\omega:=A^\omega \setminus \ (D^\omega\cup E^\omega)
  $. Note that $B^\omega$ satisfies the following properties
  \begin{enumerate}
  \item If $|B^\omega\cap I_n|= 1$ then $|B^\omega\cap I_{n+1}|=0.$
  \item $|B^\omega\cap I_n|\leq 1$ for all $n$.
  \end{enumerate}
  from which it follows that $(b_n(\omega))$ satisfies \cref{eq:62}.
 
  \vspace{1cm}
  \begin{figure}
 
    \begin{tikzpicture}
      \caption{Construction of $B^\omega$}
      \label{fig:3}
 	
      \draw (-6,0) -- (5.2,0); \filldraw [gray] (-5.5,0) circle (2pt)
      node[anchor=south] {$v_k$}; \filldraw [teal] (-4.5,0) circle
      (2pt) node[anchor=north] {$a_k(\omega)$}; \filldraw [gray]
      (-4,0) circle (2pt) node[anchor=south] {$v_{k+1}$};

      \filldraw [teal] (-1.5,0) circle (2pt) node[anchor=north]
      {$a_{k+2}(\omega)$}; \filldraw [gray] (-2.5,0) circle (2pt)
      node[anchor=south] {$v_{k+2}$}; \filldraw [gray] (-1,0) circle
      (2pt) node[anchor=south] {$v_{k+3}$};
 
      \filldraw [gray] (.5,0) circle (2pt) node[anchor=south]
      {$v_{k+4}$}; \filldraw [gray] (2,0) circle (2pt)
      node[anchor=south] {$v_{k+5}$}; \filldraw [teal] (4.5,0) circle
      (2pt) node[anchor=north] {$a_{k+3}(\omega)$}; \filldraw [gray]
      (3.5,0) circle (2pt) node[anchor=south] {$v_{k+6}$};
 
      \filldraw [gray] (5,0) circle (2pt) node[anchor=south]
      {$v_{k+7}$};
 	
    \end{tikzpicture}
 \end{figure}
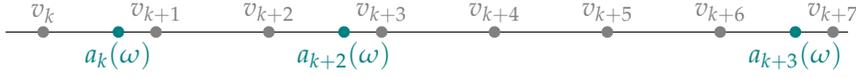
 
It remains to verify \Cref{eq:61}.

  It will be sufficient to show that for a.e. $\omega$,

 \begin{align*}
   \lim _{t\to \infty} \frac{D^\omega(t)}{A^\omega(t)}=0 \text{   and   } \lim _{t\to \infty} \frac{E^\omega(t)}{A^\omega(t)}=0.
 \end{align*}

 By the strong law of large numbers, we need to show that

\begin{align*}
  \lim _{t\to \infty} \frac{\mathbb{E}D^\omega(t)}{\sum_{n\leq t}\sigma_n}=0 \text{   and   } \lim _{t\to \infty} \frac{\mathbb{E}E^\omega(t)}{\sum_{n\leq t}\sigma_n}=0.
\end{align*}

Note that
$D^\omega(t)\leq \displaystyle\sum_{v_k\leq t}\big(\sum_{n\in I_k}
Y_n(\omega)\big).\sup_{n\in I_{k+1}}Y_n$.  Since, the random variables
$X_k=\sum_{n\in I_k}Y_n,k=1,2,3,....,$ are independent, it follows
that
\begin{align*}
  \mathbb{E}D^\omega(t)&\leq \mathbb{E}\sum_{v_k\leq t}\Big(\sum_{n\in I_k}Y_n\Big).\sup_{n\in I_{k+1}}Y_n\\
                       &\leq \mathbb{E}\sum_{v_k\leq t}\Big(\sum_{n\in I_k}Y_n\Big).\Big(\sum_{n\in I_{k+1}}Y_n\Big)\\
                       &\leq \sum_{v_k\leq t}\big(\sum_{n\in I_k}\sigma_n\big).\big(\sum_{n\in I_{k+1}}\sigma_n\big)\\
\end{align*}
Hence,
\begin{align*}
  \limsup_{t\to \infty}\frac{\mathbb{E}D^\omega(t)}{\sum_{n\leq t}\sigma_n}
  &\leq
    \limsup_{t\to \infty} \frac{\sum_{v_k\leq t}\big(\sum_{n\in I_k}\sigma_n\big).\big(\sum_{n\in I_{k+1}}\sigma_n\big)}{\sum_{n\leq t}\sigma_n}\\
  &\leq 	\limsup_{k\to \infty} \Big(\sum_{n\in I_{k+1}}\sigma_n\Big)\frac{\sum_{v_k\leq t}\big(\sum_{n\in I_k}\sigma_n\big)}{\sum_{n\leq t}\sigma_n}\\
  &=0  \text{                   \big( By \cref{eq:64}\big)}
\end{align*} 

Similarly,
\begin{align*}
  &\limsup _{t\to \infty} \frac{\mathbb{E}E^\omega(t)}{\sum_{n\leq t}\sigma_n}\\&=
  \limsup _{t\to \infty} \frac{\sum_{v_k\leq t}\big(2\cdot\sum_{m\neq n\in I_k}\sigma_m\cdot\sigma_n+3\cdot\sum_{m\neq n\neq p\in I_{k}}\sigma_m\cdot\sigma_n\cdot\sigma_p+\dots +l\cdot\sum_{m_1\neq m_2\neq \dots \neq m_l\in I_{k}}\sigma_{m_1}\cdot\sigma_{m_2}\dots \sigma_{m_l} \big)}{\sum_{n\leq t}\sigma_n}\ \ (\text{here} |I_k|=l )\\&
  \leq \limsup _{t\to \infty} \frac{\sum_{v_k\leq t}\Big(2\cdot\big(\sum_{m\in I_k}\sigma_m\big)^2+3\cdot\big(\sum_{m\in I_k}\sigma_m\big)^3+\dots +l\cdot\big(\sum_{m\in I_k}\sigma_m\big)^l \Big)}{\sum_{n\leq t}\sigma_n}\\&
  \leq \limsup _{t\to \infty} \frac{\sum_{v_k\leq t}\big(\sum_{m\in I_k}\sigma_m\big)\cdot\Big(2\cdot\big(\sum_{m\in I_k}\sigma_m\big)+3\cdot\big(\sum_{m\in I_k}\sigma_m\big)^2+\dots +l\cdot\big(\sum_{m\in I_k}\sigma_m\big)^{(l-1)}\Big)}{\sum_{n\leq t}\sigma_n}\\&
  \leq \limsup _{k\to \infty}\Big(2\cdot\big(\sum_{m\in I_k}\sigma_m\big)+3\cdot\big(\sum_{m\in I_k}\sigma_m\big)^2+\dots l\cdot\big(\sum_{m\in I_k}\sigma_m\big)^{(l-1)} \Big)\\&
  \leq \limsup _{k\to \infty} x\cdot\frac{(2-x)}{(1-x)^2} \text{     \Big( where $x=\sum_{m\in I_k}\sigma_m$ \Big)}\\&
  =0 \text{                   \big( By \cref{eq:64}\big)}
\end{align*}

This completes the proof.
\end{proof}
\section{Open Problems}
The first problem asks if our result in \Cref{thm:1} is sharp.
\begin{problem}{}{}\rr
  Suppose the sequence $(a_n)$ of positive integers satisfies
  \begin{equation}
    \label{eq:1}
    \frac{a_{n+1}}{a_{n}}>1+\frac1{\log\log n}
  \end{equation}
  Is $(a_n)$ strong sweeping out?
\end{problem}
It is known\autocite{JLW} that there is a pointwise good sequence
$(a_n)$ for $L^2$ satisfying
$\dfrac{a_{n+1}}{a_n}\geq 1+{\frac{1}{(\log n)^{1+\eta}}}$ for every
$\eta>0$ and large enough $n$. We already mentioned\autocite[Corollary
2.14]{JW} that if $(a_n)$ satisfies
$\dfrac{a_{n+1}}{a_n}\geq 1+{\frac{1}{(\log n)^{1/2-\eta}}}$ for some
positive $\eta$ then $(a_n)$ is pointwise bad for $L^2$.

\begin{problem}{}{}\rr
  Suppose the sequence $(a_n)$ of positive integers satisfies
  \begin{equation}
    \label{eq:3}
    \dfrac{a_{n+1}}{a_n}\geq 1+{\frac{1}{\log n}}
  \end{equation}
  for every large enough $n$.

  Is $(a_n)$ then pointwise bad for $L^2$?
\end{problem}
\printbibliography
\end{document}